\documentclass[twoside]{article}
\usepackage{PRIMEarxiv}
\usepackage{algorithm}
\usepackage{algpseudocode}
\usepackage{amsmath}
\usepackage{amssymb}
\usepackage{amsthm}
\usepackage{amsfonts}
\usepackage{graphicx}
\usepackage{subcaption}
\usepackage{caption} 
\usepackage{dsfont}
\usepackage{mathrsfs}
\usepackage{multicol}
\usepackage{mathtools}
\usepackage{tikz}
\usetikzlibrary{calc}
\usetikzlibrary{decorations.pathreplacing,calligraphy}
\usepackage{tikz-cd} 
\usetikzlibrary{backgrounds}
\usepackage{pst-node}
\usepackage{url}
\usepackage{hyperref}

\newtheorem{theorem}{Theorem} %% --> this is the counter 
\newtheorem{lemma}[theorem]{Lemma}

\newtheorem{definition}[theorem]{Definition}

\def\Im{{\hbox{\textbf{im}}}} % image of linear transformation 
\def\Ker{{\hbox{\textbf{ker}}}} % kernel of linear transformation
\def\field{{\mathds{K}}} % field K
\def\d{{\partial}} % boundary operator
\def\C{{\mathcal{C}_{\bullet}}} % chain complex 
\def\T{{\mathcal{T}}} % target matrix (1)
\def\M{{M}} % matching matrix (1)
\def\S{{\mathcal{S}}} % source matrix (1)
\def\D{{\mathcal{D}}} % relative boundary matrix 
\def\TT{{\tilde{\T}}} % target matrix (2) 
\def\MM{{\tilde{\M}}} % matching matrix (2)
\def\SS{{\tilde{\S}}} % source matrix (2)
\def\A{{\mathcal{A}}} % permuted target matrix 
\def\B{{\mathcal{B}}} % permuted source matrix
\def\rcb{{f_{ker}}} % relative cycle birth function
\def\rbb{{f_{im}}} % relative boundary birth function

\def\F{{F_{\bullet}}} % an arbitrary filtration F
\def\G{{G_{\bullet}}} % an arbitraty filtration G 
\def\FX{{F_{\bullet}X}} % filtration on full space, F
\def\subFX{{G_{\bullet}Y}} % filtration on subspace, G
\newcommand{\cg}{\sigma} % cells global sequence
\newcommand{\cs}{\tau} % cells subspace sequence
\newcommand{\cc}{\gamma} % cells relative cycle sequence
\newcommand{\cb}{\beta} % cells relative boundary sequence

\title{A U-match Algorithm for Persistent Relative Homology}

\author{
  Christian Lentz \\
  Tufts University \\
  Medford, MA \\
  \texttt{christian.lentz@tufts.edu} \\
  \And
  Gregory Henselman-Petrusek \\
  Pacific Northwest National Laboratory \\
  USA \\
  \texttt{gregory.roek@pnnl.gov} \\
  \And
  Lori Ziegelmeier \\
  Macalester College \\
  St. Paul, MN \\
  \texttt{lziegel1@macalester.edu} \\
}

\begin{document}
\thispagestyle{empty}
\maketitle

% ============================================  

\begin{abstract}
    A central problem in data-driven scientific inquiry is how to interpret structure in noisy, high-dimensional data. Topological data analysis (TDA) provides a solution via persistent homology, which encodes features of interest as topological holes within a filtration of data. The present work extends this framework to a related invariant which uncovers topological structure of a space relative to a subspace: persistent relative homology (PRH). We show that this invariant can be computed in a simple, highly transparent and general manner, using a two-step matrix reduction technique with worst-case time complexity comparable to ordinary persistent homology. We provide proofs demonstrating the correctness and computational complexity of this approach in addition to a performance-optimized implementation for a special case. 
\end{abstract}

\keywords{persistent homology, relative homology, topological data analysis}

% ============================================  

\section{Introduction}
\label{sec:intro}
Persistent homology captures topological structure of data and its evolution across one or multiple choices of a parameter value \cite{computingPH, barcodeGhrist, simplification, Carlsson2009TopologyAD, chomp}. Various breakthroughs in the theory and computation of persistent homology have made way for the field of \textit{Topological Data Analysis} (TDA). Presently, TDA is a growing field with increasing application and interdisciplinary potential in machine learning \cite{PHandAI}, network science \cite{networkScienceSurvery}, and robotics \cite{pathFinding} to name a few. The increasing demand for topological insight into data and the success of the persistent homology paradigm motivate deeper exploration of topological invariants as tools to extract insights from scientific data. 

However, the computation of topological invariants is a substantial bottleneck. Indeed, a grand challenge in the field of TDA is to determine which topological descriptors can be computed efficiently in a scientific setting, and how to implement these computations reliably in code. The present work seeks to address this challenge for one of the most basic topological descriptors one might apply to scientific data: persistent relative homology. Specifically, we seek to better understand the capabilities and limits of \emph{general-purpose algorithms}; those which can be applied to almost any data set, with only minimal requirements on format and structure.

\subparagraph*{Problem Statement.} 
Consider a pair of filtered topological spaces $F_0 X \subseteq \cdots \subseteq F_N X$ and $G_0 Y \subseteq \cdots \subseteq G_N Y$ such that $G_t Y \subseteq F_t X$ for any $t$. Fixing a coefficient field $\field$, our objective is to compute the barcode of a persistence module
\[
H_*(F_0X/G_0Y) \rightarrow \cdots \rightarrow H_*(F_NX/G_NY)
\]
and calculate a (relative) cycle representative for each bar in the barcode. 

\subparagraph*{Contribution.} 
Consider the classical approach to computing the barcode of the absolute persistent homology module $H_*(F_0K) \rightarrow \cdots \rightarrow H_*(F_NK)$. We form the \textit{differential} or \textit{boundary} matrix $D$ and compute a so-called  $R = DV$ decomposition \cite{vines}. The absolute barcode can then be read from the sparsity pattern of $R$, and the corresponding cycle representatives can be obtained from the columns of $R$ and $V$.  

We provide a theoretical framework to show that a similar approach can be used to compute the barcode of the PRH module. In fact, we need only perform a change of basis. We begin by computing a U-match factorization $\T\M = \D\S$, where $\D$ is a permutation of boundary matrix $D$ \cite{umatch}. We then permute columns of $\T$ and $\S$ to obtain matrices $\A$ and $\B$, respectively, and obtain a second U-match decomposition $\TT\MM=(\A^{-1}\B)\SS$. The desired barcode can be obtained from the sparsity pattern of $\MM$, and the corresponding relative cycle representatives can be obtained from the columns of the matrix $\A\TT$. This method applies to any  filtered cell complex over an arbitrary coefficient field, and allows for arbitrary filtrations $\F$ and $\G$. 

The rich structure of U-match allows our work to be straightforward and accessible, ensuring that proofs of correctness require only linear algebra and basic definitions of algebraic topology. In fact, the properties of U-match will also provide two key advantages for this method: increased flexibility in application and straightforward interpretation of generators and bounding chains for relative homology classes. We include an implementation of a performance-optimized and interpretable case in Section \ref{sec:implementation} for illustration. 

 To the best of our knowledge, the algorithm presented in this work is unique in that it combines two features: (A) $\F$ and $\G$ can be \emph{arbitrary} filtrations of a finite cell complex, and (B) the procedure can be completed in two matrix factorizations, revealing not only a barcode but also persistent relative cycle representatives.  Prior work has provided algorithms for arbitrary filtrations using five factorizations; see Previous and Related Work for details.

\subparagraph*{Previous and Related Work.} 
In \cite{morozov}, Morozov presents an algorithm to construct persistence diagrams corresponding to nested sequences of \textit{images}, \textit{kernels}, and \textit{cokernels}. The method is cubic in the number of simplices, based on the standard matrix reduction of \cite{vines}, and requires (1) simplicial complexes $K_0 \subseteq K$, (2) an injective function $f: K \rightarrow \mathbb{R}$ whose sublevel sets are subcomplexes of $K$, and (3) an injective function $g$ given by a restriction of $f$ to $K_0$. The algorithm proceeds via a series of five matrix decompositions, and can be extended to sequences of nested kernels, images, and cokernels of filtered pairs of spaces via the \textit{Mapping Cylinder Lemma} of \cite{morozov}. A streamlined presentation of this work can be found in \cite{PH-kernelsImagesCokernels}. 

Motivated by computing essential homology, the authors in \cite{extendedPersistence} introduce extended persistence, using \textit{Poincar\'e} and \textit{Lefschetz} dualities to design a matrix reduction algorithm for computing relative homology via construction of a so-called \textit{cone complex}. In \cite{dualities}, the authors introduce \textit{pointwise} and \textit{global} dualities. They show that the cone complex of \cite{extendedPersistence} is unnecessary when computing PRH, and that each of the four standard persistent (co)homology modules may be inferred from each other, and thus computed in (at most) a two-step matrix reduction. More recently, the relative Delaunay-\^Cech complex $Del\textit{\^C}(X, A)$ was introduced in \cite{relativePH} as a means of efficiently computing relative \^Cech persistent homology in low dimensional Euclidean space. Specifically, given $A \subseteq X \subseteq \mathbb{R}^d$, the authors define $Del\textit{\^C}(X, A)$ and show that the underlying filtered simplicial complex has persistent homology isomorphic to the homology of the filtered \^Cech complex $\textit{\^C}_{\bullet}(X)/  \textit{\^C}_{\bullet}(A)$.

\textit{Local homology} considers the special case $H_p(X, X \setminus x)$ for a topological space $X$ and a single point $x \in X$. Morozov's work in \cite{morozov} also describes this problem. More recently, Kerber and Söls introduced the \textit{localized bifiltration} \cite{localized} where data is filtered by scale and a second filtration parameter describes the locality of the scale filtration about a single point. In \cite{local}, the authors describe the local homology of abstract simplicial complexes with applications in the analysis of graphs and hypergraphs. In \cite{robinsonCriticalNodes}, the authors present a theoretical framework for the identification of critical nodes or bottlenecks in a network that replaces computationally expensive graph algorithms with a local homology approach. 

In the main, algorithms for persistent relative homology fall into three categories: (A) image and kernel persistence, as in \cite{PH-kernelsImagesCokernels}, which do not precisely align with relative homology but incorporate similar concepts, (B) modules of form 
$
H_*(X,F_1X) \to \cdots \to H_*(X,F_NX)
$, 
which appear in \cite{extendedPersistence, dualities}, and (C) modules of form 
$
H_*(F_0X,G_0Y) \to \cdots \to H_*(F_NX,G_NY)
$ 
where $G_tY = (F_tX) \cap Y$. The authors of \cite{PH-kernelsImagesCokernels} also consider case (C) and, to our knowledge, is the only work which also considers an arbitrary module of the form 
$
H_*(F_0X,G_0Y) \to \cdots \to H_*(F_NX,G_NY)
$ 
without placing restrictions of kind (B) or (C). This is achieved via the \textit{Mapping Cylinder Lemma}, and the approach works by applying a series of (at most) five matrix decompositions. The main contribution of the present work is an algorithm that works with equal generality, using (at most) two matrix decompositions. While \cite{dualities} also achieves a two-step matrix decomposition (at most), this falls under case (B). Additionally, computing persistent relative cycle representatives are a key focus of this work, which is not present in prevailing methods and literature.

\section{Preliminaries}
\label{sec:background} 

For additional background, we refer the reader to \cite{chomp,hatcher} for pertinent definitions of algebraic topology, \cite{crossleySimplicial} for an introduction to simplicial homology, \cite{Carlsson2009TopologyAD,computingPH} for a review of persistent homology, and \cite{DW} for an introduction to computational techniques. 

\subparagraph{Chain Complexes and Homology.}
\label{sec:background:homology} 
Persistent homology is computed via a series of refinements from raw data. First, data is encoded as a \textit{chain complex} of the form
\[
\C := \dots \text{ } C_n \text{ } \xrightarrow[]{\d_n} \text{ } C_{n-1} \text{ } \xrightarrow[]{\d_{n-1}} \text{ } \dots \text{ } \xrightarrow{\d_2} \text{ } C_1 \xrightarrow[]{\d_1} \text{ } C_{0} \text{ }  \xrightarrow[]{\d_0} 0,
\]
or a sequence of linear maps and vector spaces satisfying $\d_p\d_{p+1} = 0$. We call each $C_p$ a \textit{chain vector space} and write $C = \bigoplus_p C_p$ for the space of all chains. We refer to the homomorphisms as \textit{boundary operators}.\footnote{The term boundary operator is often used interchangeably with \textit{differential operator}.} A chain complex can be neatly described by a single linear transformation whose matrix is
\[
D \text{ = } \begin{pmatrix}
        0 & \d_1 & &  \\
        & 0 & \d_2 & & \\
        & & \ddots & \ddots & \\
        & & & 0 & \d_n \\
        & & & & 0 
\end{pmatrix},
\]
or the \textit{block boundary matrix}. It is straightforward to show that the statement $\d_p\d_{p+1} = 0$ is equivalent to the statement $\Im(\d_{p+1}) \subseteq \Ker(\d_p)$, where we refer to any member of the vector space $\Im(\d_{p+1})$ as a $p$-\textit{boundary} and any member of the vector space $\Ker(\d_p)$ as a $p$-\textit{cycle}. With this, we define the $p^{th}$ \textit{homology vector space} of the chain complex $\C$ to be the quotient
$
\centering H_p(C_\bullet) = \Ker(\d_p)/\Im(\d_{p+1}),
$
where the non-trivial equivalence classes represent groups of $p$-cycles which are not $p$-boundaries. An element $\xi$ of an equivalence class $[\xi] = \xi + \Im(\d_{p+1})$ is called a \textit{cycle representative} for that class. We write $Z := \Ker(D) \cong \oplus_p \Ker(\partial_p)$ and $B: = \Im(D) \cong \oplus_p \Im(\partial_p)$ for the total space of all cycles and boundaries, respectively, aggregated over all dimensions. We likewise write $H_*(C_\bullet): = Z/B \cong \bigoplus_p H_p(C_\bullet)$ for the total space of all homology groups.

\subparagraph{Filtered Spaces.}
\label{sec:background:filtrations} 
Our discussion will focus on chain complexes that arise from a \emph{filtration}, i.e. a nested sequence of topological cell complexes
$
\FX := F_1X \subseteq F_2X \subseteq \dots \subseteq F_{N}X
$
where $F_N X= X$. In practice, $X$ is almost always a simplicial or cubical complex. To each subcomplex $F_tX$, we associate a chain complex $\C(F_t X)$. For convenience, and where context leaves no room for confusion, we will use $F_tX$ interchangeably with $C(F_tX)$, the space of all chains in $C_\bullet(F_tX)$. The \emph{birth filtration value} of a cell $\sigma \in X$ is defined as the minimum $t$ such that $\sigma \in F_tX$; we denote this value  $b_F(\sigma) = \min\{ t : \sigma \in F_tX\}$.\footnote{In practice the sequence $F_1X \subseteq F_2X \subseteq \dots \subseteq F_{N}X$ typically corresponds to a sequence of real values $t_1 \le \cdots \le t_N$, and it is common to refer to these real numbers as the filtration values.} Given a field $\field$ and a  $p$-chain $\alpha = \sum a_i\sigma_k$ with $a_i \in \field \setminus \{0\}$ and $\sigma_k \in C_p$, we define the \emph{birth filtration value of $\alpha$} as $b_F(\alpha) := \max\{b_F(\sigma_k)\}$. 

\subparagraph*{Persistence.}
\label{sec:background:persistence}
Let $\C(X)$ be a chain complex over a filtered topological space $\FX$. Each $C_p(X)$ is generated by a standard basis of $p$-cells in the cell complex, and each linear transformation $\d_{p+1}$ maps a $(p+1)$-cell to a linear combination of $p$-cells, or a $p$-\textit{chain}, which is its boundary.\footnote{The boundary of a chain $\alpha \in C_p$ is $\d_p(\alpha)$, where differential map $\d_p$ is defined for cells of $C_p$ and extended linearly to all chains in $C_p$.} %Further, $\C(X)$ inherits the hierarchical structure of $\FX$, giving rise to a \textit{filtered chain complex}. 
Figure \ref{fig:filteredChainComplex} provides an example with a sequence of chain complexes and inclusions, called \textit{chain maps}, on the corresponding chain vector spaces of the complexes. 
\begin{figure}[H]
\begin{center}
\begin{tikzcd}
0 \arrow[r, "\partial_{n+1}"]  & C_n(F_{a}X) \arrow[r, "\partial_n"] \arrow[d, "f_n", hook] & \dots \arrow[r, "\partial_1"] & C_0(F_{a}X) \arrow[r, "\partial_0"] \arrow[d, "f_0", hook] & 0 \\
0 \arrow[r, "\partial_{n+1}"'] & C_n(F_bX) \arrow[r, "\partial_n"'] & \dots \arrow[r, "\partial_1"'] & C_0(F_bX) \arrow[r, "\partial_0"'] & 0
\end{tikzcd}
\end{center}
\caption{Two levels of a filtered chain complex, over a filtered topological space $\FX$, whose chain maps are of the form $f_p: C_p(F_{a}X) \hookrightarrow C_p(F_bX)$ for some $a \leq b$.}
\label{fig:filteredChainComplex}
\end{figure} 

These chain maps induce linear transformations on homology given by $f_p^*: H_p(F_{a}X) \rightarrow H_p(F_bX)$ for some $a \leq b$ which can be placed in a sequence
$
\centering H_*(\FX) := H_*(F_{1}X) \rightarrow \cdots \rightarrow H_*(F_{N}X),
$ 
called a (graded) \textit{persistent homology module}. A result of Gabriel \cite{Gabriel} shows that each persistent homology module over a finite cell complex admits a direct-sum decomposition of the form
$
H_*(\FX) \cong \oplus_\xi \mathds{I} \langle b_{\xi}, d_{\xi} \rangle,
$
where each $\langle b_{\xi}, d_{\xi} \rangle$ is a so-called \emph{interval module} of the form $0 \to \cdots \to 0 \to \field \xrightarrow{id} \cdots  \xrightarrow{id} \field \to 0 \to \cdots \to 0$ describing the birth $b_{\xi}$ and death $d_{\xi}$ of some cycle representative $\xi \in H_*(\FX)$. The multiset of interval modules gives the \textit{barcode} of the persistence module $H_*(\FX)$.

\section{Persistent Relative Homology}
\label{sec:prh}

Now, we extend the definitions and notation of the previous section to PRH. We direct the reader to \cite{crossleyDeconstructionist} for more on quotient topology and \cite{hatcher} for more on relative homology. We consider a filtered pair of topological spaces $(X, Y)$ equipped with filtrations $\F$ and $\G$ respectively. We denote the pair together with their filtrations as $(\FX, \subFX)$ and require it to satisfy (1) $F_NX = X$, (2) $G_NY = X$, and (3) $G_tY \subseteq F_tX$ for each $t \in \{1, \dots, N\}$ for $N \in \mathds{N}$. Note that (2) is to ensure that the filtration terminates.  

\subparagraph*{Quotient Chain Complexes.}
PRH can be considered as the homology of a specific type of filtered chain complex which arises from a sequence of \textit{relative chain vector spaces} describing the span of all \textit{relative chains} in a \textit{quotient space} $X/Y$. In dimension $p$, this chain vector space is the quotient $C_p(X,Y) = C_p(X)/C_p(Y)$. Since $\d_p(C_p(Y)) \subseteq C_{p-1}(Y)$, there is no need to define a so-called relative boundary operator. Thus, we use the differentials $\d_p$ and a sequence of chain vector spaces of the form $C_p(X,Y)$ to define a \textit{quotient chain complex} $\C(X/Y)$ over ($\FX$, $\subFX$). 

\subparagraph*{Relative Homology.}
In absolute homology, we interpret any $\alpha \in C_p(X)$ to be a $p$-cycle if and only if $\d_p(\alpha)$ is trivial. A \textit{relative p-cycle} is defined by taking any chain of $C_{p-1}(Y)$ to be trivial. Similarly, there is a modified notion of a boundary in the relative homology setting. 
\begin{definition}\label{def:relCycleAndBoundary}
    A \textbf{relative p-cycle} is an element of $\partial_p^{-1}(C_{p-1}(Y)) = \{\alpha \in C_p(X): \partial_p(\alpha) \in C_{p-1}(Y)\}$. Across all $p$, we denote the vector space of relative cycles as $Z(X,Y) = C(X) \cap \partial^{-1}(Y)$. Similarly, a \textbf{relative p-boundary} is any chain $\alpha = \d_{p+1} (\beta) + \gamma$ for some $\beta \in C_{p+1}(X)$ and $\gamma \in C_{p}(Y)$ and the vector space of relative boundaries is $B(X,Y) = B(X) + C(Y)$.
\end{definition}
Since $\vec{0} \in C_{p-1}(Y)$ trivially, it is immediate that any absolute $p$-cycle must be a relative $p$-cycle. On the other hand, if a relative $p$-cycle $\alpha$ differs from an absolute $p$-boundary $\d_{p+1}(\beta)$ by some (possibly trivial) $p$-chain $\gamma \in C_p(K_0)$, then $\alpha$ must also be a relative $p$-boundary. Since any absolute $p$-boundary may be written as $\alpha = \d_{p+1} (\beta) + \vec{0}$, then any absolute $p$-boundary must also be a relative $p$-boundary. The \textit{relative homology} of $(X, Y)$ is the quotient vector space $H_*(X, Y) = Z(X, Y) / B(X, Y)$.

\begin{definition}\label{def:relBoundingChain}
    A \textbf{relative bounding chain} for a chain $\xi \in C_p(X)$ is a chain $\beta \in C_{p+1}(X)$ such that $\xi - \partial_{p+1}(\beta) \in C_p(Y)$. In words, we say that $\xi$ bounds $\beta$ relative to $Y$.
\end{definition}
The following lemma of \cite{hatcher} summarizes the structure of relative homology.%, and we use it to establish our stability result in Theorem \ref{thm:stability}.
\begin{lemma}\label{lem:relChainMaps}
    If pairs of spaces $(X, Y)$ and $(K, L)$ satisfy $X \subseteq K$ and $Y \subseteq L$, then the chain map $g_p: C_p(X, Y) \rightarrow C_p(K, L)$ induces a homomorphism $g_p^* : H_p(X,Y) \rightarrow H_p(K,L)$ on relative homology. 
\end{lemma}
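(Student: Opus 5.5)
The plan is to work at the chain level with the inclusion $\iota: C(X) \hookrightarrow C(K)$ and to show two things: that $\iota$ descends to a chain map $g_p$ on the quotient complexes, and that this chain map sends relative cycles to relative cycles and relative boundaries to relative boundaries. The homomorphism $g_p^*$ is then the induced map on quotients, $Z(X,Y)/B(X,Y) \to Z(K,L)/B(K,L)$.

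\emph{Step 1: $g_p$ is well defined.} Set $g_p(\alpha + C_p(Y)) := \alpha + C_p(L)$ for $\alpha \in C_p(X)$. This does not depend on the representative: since $Y \subseteq L$, we have $C_p(Y) \subseteq C_p(L)$. So if two chains differ by an element of $C_p(Y)$, they also differ by an element of $C_p(L)$.

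\emph{Step 2: $g_p$ commutes with the boundary.} The boundary operator on $C(X)$ is the restriction of the one on $C(K)$, because the boundary of a cell is determined by the cell and not by the ambient complex. Hence $\partial \iota = \iota \partial$. Passing to quotients, where $\partial$ is well defined by the observation $\partial_p(C_p(Y)) \subseteq C_{p-1}(Y)$ (and likewise for $L$), we get $\partial g_p = g_{p-1}\partial$.

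\emph{Step 3: cycles and boundaries are preserved.}
\begin{itemize}
\item Let $\alpha \in Z(X,Y)$, so $\partial_p \alpha \in C_{p-1}(Y) \subseteq C_{p-1}(L)$. Viewed in $C_p(K)$, $\alpha$ therefore lies in $Z(K,L)$.
\item Let $\alpha = \partial_{p+1}\beta + \gamma$ with $\beta \in C_{p+1}(X) \subseteq C_{p+1}(K)$ and $\gamma \in C_p(Y) \subseteq C_p(L)$. Then $\alpha \in B(K,L)$ by Definition \ref{def:relCycleAndBoundary}.
\end{itemize}
So $\iota$ restricts to a linear map $Z(X,Y) \to Z(K,L)$ that carries $B(X,Y)$ into $B(K,L)$. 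By the universal property of quotients it induces a linear map $g_p^*: H_p(X,Y) \to H_p(K,L)$, with $[\alpha] \mapsto [\alpha]$.

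\emph{Main obstacle.} There is no substantive obstacle; the one point needing care is the compatibility of the boundary operators in Step 2. I would justify it cell-by-cell: the chain complex of a subcomplex is a subcomplex of the chain complex, so the differentials agree on $C(X)$. Functoriality, namely that composition and identities are respected, follows because inclusions compose, and I would note it only in passing.
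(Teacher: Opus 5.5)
Your proposal is correct. The paper gives no proof of its own; it cites Hatcher, whose argument is the one in your Steps 1--2: inclusion of pairs induces a chain map $C(X)/C(Y) \to C(K)/C(L)$, and chain maps induce maps on homology. Your Step 3, which checks directly via Definition~\ref{def:relCycleAndBoundary} that $Z(X,Y)$ maps into $Z(K,L)$ and $B(X,Y)$ into $B(K,L)$, is the same argument in the paper's $Z/B$ description of $H_*(X,Y)$, and on its own already gives the conclusion.
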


\subparagraph*{Persistence.}
Lemma \ref{lem:relChainMaps} indicates that the inclusion maps $G_tY \subseteq G_{t+1}Y$,  $F_tX \subseteq F_{t+1}X$, and $G_tY \subseteq F_{t}X$ yield induced maps on relative homology of the form 
$
H_p(F_1,G_1) \to H_p(F_2,G_2) \to \cdots \to H_p(F_N,G_N)
$. 
This is the (dimension $p$) \emph{persistent relative homology module}. Across all $p$, this gives rise to a graded persistent relative homology module $H_*(\FX, \subFX)$ which admits a unique barcode decomposition due to \cite{Gabriel}.

% ============================================

\section{U-match Decomposition}
\label{sec:umatch}

U-match decomposition is a form of matrix decomposition which was introduced to study persistent homology and its variants (persistent cohomology, extended persistence, etc.), both structurally and algorithmically \cite{umatch}. Every matrix admits a U-match decomposition, which can be computed in cubic time \cite{umatch}.

\begin{definition}\label{def:umatch}
    For a field $\field$, let $\field^{n \times m}$ denote the set of all $n \times m$ matrices with coefficients from $\field$. A \textbf{U-match decomposition} of a matrix $D \in \field^{n \times m}$ is given by a tuple of matrices $(\T,\M,D,\S)$ which satisfy the following three conditions: 
    \begin{itemize}
        \item $\T\M = D\S$.
        \item $\T \in \field^{n \times n}$ and $\S \in \field^{m \times m}$ are both upper-triangular with diagonal entries equal to one.
        \item $\M \in \field^{n \times m}$ is a \textit{generalized matching matrix}. Concretely, this means that $\M$ has at most one nonzero entry per row, and at most one nonzero entry per column.         
    \end{itemize}
\end{definition}

This decomposition is useful in a variety of contexts because the matrices $\S$ and $\T$ carry rich information about the kernels and images of several families of row and column submatrices of $\D$ which are relevant to the persistence computation.
U-match is a close relative of the well-known $R=DV$ decomposition, and there is a one-to-one correspondence between so-called \emph{proper} U-match decompositions and \emph{proper} $R=DV$ decompositions; see \cite{umatch} for details.

% ============================================

\section{Computing PRH via U-match}
\label{sec:umatchPRH}

Here we introduce the main result: a procedure to compute barcode decompositions for persistent relative homology, via matrix factorization. We also prove correctness and provide a cubic complexity bound. 
% -----------

\subsection{Main algorithm}
\label{sec:algorithmconventions}

Algorithm \ref{alg:prh} provides pseudocode for the main algorithm. In order to finesse certain edge cases, this procedure assumes, without loss of generality, that the  subspace $Y$ is equal to $X$; that is, $G_N = X = F_N$. In instances where this is not the case, one can always extend $\F$ and $\G$ via $G_{N+1}X = X = F_{N+1}X$. 

At a high level, the algorithm proceeds with a few key steps. 
First, a matrix $\D$ is constructed by permuting the rows of the boundary matrix $D$ in a manner compatible with filtration $\G$ (increasing values top to bottom), and permuting the columns in a manner compatible with the filtration $\F$ (increasing values left to right).
We then compute a U-match decomposition $\T\M = \D\S$. As we will later see, the columns of $\T$ and the columns of $\S$ contain a basis for the relative boundaries and a basis for the relative cycles, respectively. We then compute a second U-match decomposition, which yields a single matrix whose columns contain a basis for \emph{both} the relative cycle and the relative boundary spaces.  Both the barcode and the relative cycle representatives can be determined from matrices in this second U-match.

Two special functions are used in the pseudocode.  
The \emph{relative cycle sublevel set function} is the unique integer-valued function $f_{ker}$ defined on $C(X)$ such that $Z(F_tX, G_tX) = \{ c \in C(X) : f_{ker}(c) \le t \} $ for all $t$. That is, $c \in Z(F_tX, G_tX) \iff f_{ker}(c) \le t$. Intuitively, we think of $f_{ker}(c)$ as the filtration value where $c$ becomes a relative cycle, i.e., the filtration value where $[c]$ is born, as a relative homology class.
Similarly, the \emph{relative boundary sublevel set function} is the unique integer-valued function $f_{im}$ defined on $C(X)$ such that $B(F_tX, G_tX) = \{ c \in C(X) : f_{im}(c) \le t \} $ for all $t$. That is, $c \in B(F_tX, G_tX) \iff f_{im}(c) \le t$. 
Intuitively, we think of $f_{im}(c)$ as the filtration value where $c$ becomes a relative boundary, i.e., the filtration value where $[c]$ dies, as a relative homology class.

\begin{algorithm}[h!]
\caption{Persistent Relative Homology}

\label{alg:prh}
    \begin{algorithmic}[1]
        \Require {A finite cell complex $X$  equipped with filtrations $\F: F_1X \subseteq \cdots \subseteq F_NX$ and $\G: G_1 X \subseteq \cdots G_N X$. We require $G_NX = X = F_NX$ and $G_kX \subseteq F_kX$ for all $k$.}
        \Ensure {An interval decomposition of the persistent relative homology module $H_*(F_1X,G_1X) \to \cdots \to H_*(F_NX,G_NX)$. This decomposition is encoded as a set $I$ containing one triplet $(birth,death,representative)$ for each half-open interval $[birth,death)$ in the barcode, where $representative$ is a corresponding relative cycle representative.}
        \State{$\cg_1, \dots , \cg_m \gets$ a permutation of the cells of $X$ such that $b_F(\cg_1) \le \cdots \le b_F(\cg_m)$} \ 
        \State{$\cs_1, \dots, \cs_m \gets$ a permutation of the cells in $X$ such that $b_G(\cs_1) \leq \cdots \leq b_G(\cs_k)$}
        \State{$\D \gets$ the differential matrix for $\C(X)$, with rows and columns ordered such that $\partial(\cg_j) = \sum_i \D_{ij} \cs_i$. }
        \State{Define $\D_{\cs_i, \cg_j} = \D_{ij}$ (for all $i,j$;  only for downstream analysis). }
        \State{$(\T, \M, \D, \S) \gets $ a U-match decomposition of $\D$.}
        \State{Define $\T_{\cs_i \cs_j} = \T_{ij}$, and $\S_{\cg_i \cg_j} = \S_{ij}$ (for all $i,j$;  only for downstream analysis)}
        \State{$\cb_1, \dots, \cb_m \gets $ perm.\ of the cells of $X$ s.t.  $\rbb(COL_{\cb_1}(\T)) \le \cdots \le \rbb(COL_{\cb_m}(\T))$ }
        \State{$\cc_1, \dots, \cc_m \gets$ perm. of the cells of $X$ s.t. $\rcb(COL_{\cc_1}(\S)) \leq \cdots \leq \rcb(COL_{\cc_m}(\S))$}        
        \State{$\A \gets$ the permutation of $\T$ such that $\A_{ij} = \T_{\cs_i,\cb_j}$.}    
        \State{$\B \gets$ the permutation of $\S$ such that $\B_{ij} = \S_{\cs_i,\cc_j}$}
        \State{Define $\A_{\cs_i,\cb_j} = \T_{\cs_i,\cb_j}$ and $\B_{\cs_i,\cc_j} = \S_{\cs_i,\cc_j}$ (for all $i,j$;  only for downstream analysis)}
        \State{$(\TT, \MM, \A^{-1}\B, \SS) \gets$ a U-match decomposition of $\A^{-1}\B$.}
        \State{Define $\TT_{\cb_i \cb_j} = \TT_{ij}$, and $\SS_{\cc_i \cc_j} = \SS_{ij}$ (for all $i,j$;  only for downstream analysis)}
        \State{$I \gets \emptyset$}
        \For{each 
        %column $c = COL_j(\A \TT)$ of matrix $\A \TT$
        nonzero entry $\MM_{ij}$ of the matrix $\MM$
        }
            \State{$representative \gets \sum_k (\A \TT)_{ki} \cs_k$ }
            \State{$birth \gets f_{ker}(representative)$}
            \State{$death \gets f_{im}(representative)$}  
            \If{$birth \neq death$}
            \State{insert $(birth, death, representative)$ into $I$}       
            \EndIf         
        \EndFor
        \State{return $I$}
    \end{algorithmic}
\end{algorithm}

The algorithm produces several different, each of which can be indexed by either integers or by cells. (see Algorithm \ref{alg:prh} for details): 
\begin{equation}
        \A_{\cs_i \cb_j}
        = \A_{ij} 
        \hspace{0.45cm}
        \B_{\cs_i \cc_j} = \B_{ij}
        \hspace{0.45cm}    
        \M_{\cs_i \cg_j} = \M_{ij}
        \hspace{0.45cm}            
        \S_{\cg_i \cg_j} = \S_{ij}
        \hspace{0.45cm}
        \T_{\cs_i \cs_j} = \T_{ij}
        \hspace{0.45cm}        
        (\A\TT)_{\cs_i \cc_j} = (\A\TT)_{ij}
        \notag
\end{equation}

We will implicitly identify the columns of these matrices with the corresponding linear combination of cells. For example, 
$$
COL_{\eta}(\A) \equiv \sum_\sigma \A_{\sigma \eta} \cdot \sigma
$$
where $\sigma$ runs over all cells in $X$. Note that we do not have to place a subscript $i$ on the variable $\sigma$ in $\sum_{\sigma} \A_{\sigma \eta} \cdot \sigma$ because the cells themselves  are valid indices for the matrix $\A$.

% $\T_{ij} = \T_{\cs_i \cs_j}$, $\A_{ij} = \A_{\cs_i \cb_j} = \T_{\cs_i \cb_j}$, $\S_{ij} = \S_{\cg_i \cg_j}$, $\B_{ij} = \B_{\cs_i \cc_j} = \S_{\cs_i \cc_j}$, $(\A\TT)_{ij} = (\A\TT)_{\cs_i \cc_j}$

Note, further, that $\A$ and $\T$ are ``equivalent permutations,'' in the sense that $\A_{\eta \rho} = \T_{\eta \rho}$ for all cells $\eta, \rho$, although $\A_{ij} \neq \T_{ij}$ in general. Likewise $\B$ and $\S$ are equivalent permutations, in the sense that $\B_{\eta \rho} = \S_{\eta \rho}$ for all cells $\eta, \rho$, although $\B_{ij} \neq \S_{ij}$ in general. 

\subsection{Compatible ordered bases} 

An \emph{ordered basis} for a vector space $V$ is an ordered (nonrepeating) sequence of vectors $b = (b_1, \ldots, b_k)$ whose elements form a basis of $V$. 
We say that $b$ is \emph{compatible} with a filtration of subspaces $FV: F_1V \subseteq F_2V \subseteq \cdots \subseteq F_NV$ if, for each $k$, there exists an $\ell$ such that $(b_1, \ldots, b_\ell)$ forms a basis for the subspace $F_kV$. In this subsection we show that Algorithm \ref{alg:prh} produces compatible ordered bases for several several of the fundamental filtrations  in persistent relative homology. We will use these bases to prove correctness in a later section.

\begin{theorem}
    \label{thm:bcycles}
    The columns of matrix  $\S$ (respectively, matrix $\B$) contain a basis for each relative cycle space $Z(F_t X, G_t X)$.
    In fact, the sequence $COL_1(\B), \ldots, COL_m(\B)$ is a compatible ordered basis for the filtration $Z(F_1 X, G_1 X) \subseteq \cdots  \subseteq  Z(F_N X, G_N X)$.    
\end{theorem}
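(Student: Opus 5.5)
The plan is to show that for every $t$ the columns of $\S$ indexed by cells $\cg$ with $\rcb(COL_{\cg}(\S)) \le t$ form a basis of $Z(F_tX, G_tX)$. Since $\B$ is a column permutation of $\S$ that sorts columns by $\rcb$, compatibility then follows. Indeed, for each $t$ the prefix $COL_1(\B),\dots,COL_\ell(\B)$, with $\ell$ the number of columns having $\rcb \le t$, is exactly this set. Linear independence of all columns is immediate, because $\S$ is upper unitriangular. So the whole content is the spanning statement: every relative cycle $c \in Z(F_tX,G_tX)$ is a linear combination of columns $COL_{\cg}(\S)$ that are themselves relative cycles at time $t$.

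First I will record two structural facts about the ordered matrix $\D$. Because columns are ordered by $b_F$, the chains supported on $F_tX$ are exactly the span of an initial segment of column indices, say $\cg_1,\dots,\cg_{j_t}$. Because rows are ordered by $b_G$, the chains supported in $G_tX$ are an initial segment of row indices $\cs_1,\dots,\cs_{i_t}$. Hence $c \in Z(F_tX,G_tX)$ if and only if $c$ lies in the column prefix $j \le j_t$ and $\D c$ vanishes in rows $i > i_t$. That is, $Z(F_tX,G_tX)$ is the kernel of the lower-left block $\D_{>i_t,\le j_t}$, padded by zeros.

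Second, I will exploit the U-match identity restricted to blocks. Upper triangularity of $\S$ means the first $j_t$ columns of $\S$ span the coordinate prefix $\le j_t$. Upper triangularity of $\T$ (and of $\T^{-1}$) means the row-suffix $> i_t$ of $\D\S = \T\M$ equals $\T_{>i_t,>i_t}\M_{>i_t,\cdot}$, where $\T_{>i_t,>i_t}$ is invertible. Thus for $j \le j_t$, the column $COL_j(\S)$ is a relative cycle at time $t$ iff $COL_j(\M)$ vanishes below row $i_t$. This holds iff column $j$ of $\M$ is zero or its unique nonzero entry lies in a row $\le i_t$.

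Now I compute dimensions. Write $Q = \D_{>i_t,\le j_t}$. We have $Q\,\S_{\le j_t,\le j_t} = \T_{>i_t,>i_t}\M_{>i_t,\le j_t}$. Since both $\S_{\le j_t,\le j_t}$ and $\T_{>i_t,>i_t}$ are invertible, $\operatorname{rank}Q$ equals the number of nonzero entries of $\M$ in that block. Therefore $\dim\Ker Q = j_t - \#\{\text{matched entries in the block}\}$, and this is exactly the number of columns $j \le j_t$ identified in the previous step. Those columns are independent and lie in the kernel, so they form a basis of $Z(F_tX,G_tX)$.

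One point still needs care: these columns must be exactly those with $\rcb \le t$. I must check that no column with $j > j_t$ has $\rcb \le t$. This holds because such a column has a nonzero entry at $\cg_j \notin F_tX$, by the unit diagonal. The main obstacle I expect is this bookkeeping of blocks under the two different orderings (rows by $G$, columns by $F$), in particular the ties in birth values. I will handle ties by taking $i_t, j_t$ to be the last indices with birth $\le t$, which makes the prefixes exact.
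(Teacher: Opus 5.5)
Your proof is correct, but it takes a different route from the paper's. The paper's proof is essentially a citation. It writes $Z(F_tX,G_tX)$ as the intersection of the column-prefix subspace $C(F_tX)$ with the preimage $\partial^{-1}(G_tX)$ of a row-prefix subspace. It then invokes \cite[Theorem 10]{umatch}, which says the columns of $\S$ contain a basis for every such subspace, and compatibility of $\B$ follows from sorting by $\rcb$.

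You instead prove the needed special case of that external theorem directly. Upper triangularity of $\S$ and $\T$ lets you restrict $\D\S=\T\M$ to the block identity $\D_{>i_t,\le j_t}\S_{\le j_t,\le j_t}=\T_{>i_t,>i_t}\M_{>i_t,\le j_t}$. This one identity does two jobs: it characterizes which columns of $\S$ are relative cycles at time $t$, and, through the matching structure of $\M$, it shows that their number equals $\dim Z(F_tX,G_tX)$.

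The trade-offs are these. The paper's route is shorter and reuses a single general result that also handles the sums of subspaces needed in Theorem~\ref{thm:aboundaries}. Yours is self-contained and uses only elementary linear algebra. It also gives, as a by-product, the lookup formula $\rcb(COL_\alpha(\S))=\max\{b_G(COL_\alpha(\M)),b_F(\alpha)\}$ that the paper proves separately later. Your criterion says column $\cg_j$ lies in $Z(F_tX,G_tX)$ exactly when $b_F(\cg_j)\le t$ and its matched row in $\M$, if any, has $b_G\le t$.

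Two minor remarks:
\begin{itemize}
\item $\B$ is obtained from $\S$ by re-indexing rows (by $\cs$ instead of $\cg$) as well as permuting columns. This does not change the chain each column represents, so your conclusion is unaffected.
\item Your final check that no column with $j>j_t$ has $\rcb\le t$ is correct but automatic. The columns you exhibit already span $Z(F_tX,G_tX)$, and all columns of $\S$ are linearly independent, so no other column can lie in that space.
\end{itemize}
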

\begin{proof}
    The space $Z(F_t X, G_t X)$ is the set of all chains $c \in C(F_tX)$ such that $\partial c \in G_tX$.  This space can be written as the intersection of two subspaces:
        (A) the space of chains $C(F_tX)$, which corresponds to the first $\ell$ columns of $\D$, for some $\ell$, and
        (B) the space of inverse images $\partial^{-1}(G_tX)$, where $G_t X$ corresponds to the first $\ell$ rows of $\D$, for some $\ell$. 
    Therefore \cite[Theorem 10]{umatch} implies that the columns of $\S$ contain a basis for each subspace  $Z(F_t X, G_t X)$. The same conclusion holds for the columns of $\B$, because $\B$ is an equivalent permutation of $\S$ (c.f. Section \ref{sec:algorithmconventions}); in particular, the unordered set of chains represented by the columns of $\B$ equals the unordered set of chains represented by the columns of $\S$.    
    % by construction; in particular, because $\B$ is a permutation of $\S$, for all $j$ there exists a $k$ such that $\sum_i \B_{ij} \cs_i = \sum_i \S_{ik} \cg_i$.  
    Because the columns of $\B$ are arranged in increasing order with respect to the function $f_{ker}$, it follows that each $Z(F_t X, G_t X)$ is the span of the first  $\ell$ columns of $\B$, for some $\ell$.
\end{proof}

\begin{theorem}
    \label{thm:aboundaries}
    The columns of matrix  $\T$ (respectively, matrix $\A$) contain a basis for each relative boundary space $B(F_tX, G_t X)$. 
    In fact, the sequence $COL_1(\A), \ldots, COL_m(\A)$ is a compatible ordered basis for the filtration $B(F_1 X, G_1 X) \subseteq \cdots  \subseteq  B(F_N X, G_N X)$.    
\end{theorem}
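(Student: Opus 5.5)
We follow the template of the proof of Theorem \ref{thm:bcycles}, now applied to the column space of $\T$ instead of $\S$. First we rewrite the relative boundary space as a sum of two subspaces:
\[
B(F_tX, G_tX) = \partial\big(C(F_tX)\big) + C(G_tX).
\]
We then read off the matrix meaning of each summand. The first summand $\partial(C(F_tX))$ is the column space of the submatrix of $\D$ formed by its first $\ell$ columns, for some $\ell$, because the columns of $\D$ are ordered compatibly with $\F$. The second summand $C(G_tX)$ is the span of the first $k$ standard basis vectors of the row space, for some $k$, because the rows of $\D$ are ordered compatibly with $\G$.

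The key step is to show that the columns of $\T$ contain a basis for each space of the form $\Im(\D_{:,1:\ell}) + \langle e_1,\dots,e_k\rangle$. We would invoke the companion statement of \cite[Theorem 10]{umatch} for images of column submatrices modulo leading rows. If that statement is not available verbatim, we prove it directly as follows.

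From $\T\M = \D\S$ and the fact that $\S$ is upper unitriangular, the columns $\D\S_{:,1:\ell}$ span $\Im(\D_{:,1:\ell})$. Each of these columns equals $\M_{ij}\,COL_i(\T)$ when column $j$ of $\M$ is matched to row $i$, and equals $0$ otherwise. Hence $\Im(\D_{:,1:\ell})$ is spanned by the columns $COL_i(\T)$ with $i$ matched to some $j \le \ell$.

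Next, because $\T$ is upper unitriangular, its first $k$ columns span $\langle e_1,\dots,e_k\rangle$. Therefore the sum is spanned by the columns of $\T$ indexed by the set $\{1,\dots,k\} \cup \{i : i \text{ matched to some } j\le \ell\}$. These columns are distinct columns of an invertible matrix, so they are linearly independent and form a basis. This establishes the claim for $\T$.

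The statement for $\A$ then follows because $\A$ is an equivalent permutation of $\T$, so its columns form the same unordered set of chains. To obtain compatibility, note that the columns of $\A$ are sorted by $\rbb$. The set $\{c : \rbb(c) \le t\}$ is exactly $B(F_tX,G_tX)$. Since that space has a basis consisting of columns of $\A$, namely those with $\rbb \le t$, the space equals the span of the first $\ell$ columns of $\A$ for the appropriate $\ell$. Every column lies in $B(F_NX,G_NX) = C(X)$, so $\rbb$ is finite on all columns.

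We expect the main obstacle to be the identification of $\Im(\D_{:,1:\ell})$ with a span of columns of $\T$. This requires careful bookkeeping of the matching, for columns of $\D$ vs.\ columns of $\M$, under the cell indexing of the paper.
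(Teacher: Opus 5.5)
Your proof is correct and follows the paper's route. You write $B(F_tX,G_tX)=\partial(C(F_tX))+C(G_tX)$ as the span of leading columns of $\D$ plus leading row-basis vectors, conclude that the columns of $\T$ contain a basis for it (the paper simply cites \cite[Theorem 10]{umatch}), transfer this to $\A$ as an equivalent permutation of $\T$, and obtain compatibility from the $\rbb$-sorting. Your self-contained verification via $\T\M=\D\S$ and the matching structure of $\M$ is valid, and it is essentially the argument the paper itself gives later, in the proof of the lookup formula for $\rbb(COL_\alpha(\T))$.
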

\begin{proof}
    The space $B(F_tX, G_t X)$ is the sum of two subspaces: (A) the space of boundaries $\partial(F_tX)$, which corresponds to the first $\ell$ columns of $\D$, for some $\ell$, and (B) the space of chains $C(G_tX)$, where $G_t X$ corresponds to the first $\ell$ rows of $\D$, for some $\ell$. 
    Therefore \cite[Theorem 10]{umatch} implies that the columns of $\T$ contain a basis for each subspace $B(F_tX, G_t X)$.  
    The same conclusion holds for the columns of $\A$, because $\A$ is an equivalent permutation of $\T$ (c.f. Section \ref{sec:algorithmconventions}); in particular, the unordered set of chains represented by the columns of $\A$ equals the unordered set of chains represented by the columns of $\T$. 
    Because the columns of $\A$ are arranged in increasing order with respect to the function $f_{im}$, it follows that each $B(F_t X, G_t X)$ is the span of the first  $\ell$ columns of $\A$, for some $\ell$.
\end{proof}

\begin{theorem}
    \label{thm:atboundaries}
    The sequence $COL_1(\A\TT),\ldots,  COL_m(\A\TT)$ is a compatible ordered basis for the filtration $B(F_1 X, G_1 X) \subseteq \cdots  \subseteq  B(F_N X, G_N X)$.  
\end{theorem}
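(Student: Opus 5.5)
The plan is to show that right-multiplying $\A$ by the upper unitriangular matrix $\TT$ does not change the span of any initial segment of columns. The result then follows directly from Theorem \ref{thm:aboundaries}.

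First, I will record the structural fact. By Definition \ref{def:umatch}, $\TT$ is upper triangular with ones on the diagonal. Hence for each $j$,
\[
COL_j(\A\TT) = \sum_{k \le j} \TT_{kj}\, COL_k(\A) = COL_j(\A) + \sum_{k<j} \TT_{kj}\, COL_k(\A).
\]
So $COL_j(\A\TT)$ lies in $\mathrm{span}\{COL_1(\A),\ldots,COL_j(\A)\}$.

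Second, I will show by induction on $\ell$ that
\[
\mathrm{span}\{COL_1(\A\TT),\ldots,COL_\ell(\A\TT)\} = \mathrm{span}\{COL_1(\A),\ldots,COL_\ell(\A)\}.
\]
The inclusion $\subseteq$ is the previous step. For $\supseteq$, solve the displayed identity for $COL_\ell(\A)$; this is possible because the diagonal coefficient is $1$. Equivalently, the leading $\ell\times\ell$ block of $\TT$ is invertible, and its inverse is again upper unitriangular. Since $\A$ is invertible (as a permutation of the invertible $\T$), the columns of $\A\TT$ are linearly independent and form an ordered basis of $C(X)$ without repetition.

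Third, I will combine this with Theorem \ref{thm:aboundaries}. For each $t$ there is an $\ell$ such that $COL_1(\A),\ldots,COL_\ell(\A)$ is a basis of $B(F_tX,G_tX)$. By the span equality, $COL_1(\A\TT),\ldots,COL_\ell(\A\TT)$ spans the same space, and these $\ell$ vectors are linearly independent, so they form a basis of $B(F_tX,G_tX)$. This is exactly compatibility.

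The only point requiring care is indexing. $\TT$ is computed for $\A^{-1}\B$ with rows indexed by $1,\ldots,m$ in the $\cb$ order, so I need to check that the product $\A\TT$ pairs column $k$ of $\A$ (the cell $\cb_k$) with row $k$ of $\TT$. This is consistent because both are indexed by position. I expect this bookkeeping to be the main, though minor, obstacle. Nothing about the second U-match beyond the unitriangularity of $\TT$ is needed.
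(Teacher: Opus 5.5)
Your proof is correct and takes essentially the same route as the paper: the unitriangularity of $\TT$ means the first $\ell$ columns of $\A\TT$ span the same space as the first $\ell$ columns of $\A$, so compatibility carries over from Theorem \ref{thm:aboundaries}. Your version is more explicit than the paper's one-line argument, which only asserts that each column of $\A\TT$ lies at the same filtration level as the corresponding column of $\A$. You also spell out the initial-segment span equality and the linear independence that this assertion leaves implicit.
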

\begin{proof}
    This follows from Theorem \ref{thm:aboundaries}, because $\TT$ is invertible and upper triangular. In particular, each column $COL_k(\A\TT)$ is a nontrivial linear combination of the first $k$ columns of $\A$, so it belongs to the same filtration level as $COL_k(\A)$.
\end{proof}

Because $\MM = \TT^{-1}(\A^{-1}\B) \SS$ is a product of invertible matrices, $\MM$ too is invertible. Since $\MM$ is a generalized matching matrix, it follows that each row (respectively, column) of $\MM$ contains exactly one nonzero entry.
Let $\pi = (\pi_1, \pi_2, \ldots )$ be the unique permutation such that $\MM_{\pi_1, 1}, \MM_{\pi_2, 2}, \ldots$ are the nonzero entries of $\MM$.

\begin{theorem}
    \label{thm:atcycles}
    The sequence $COL_{\pi_1}(\A\TT), \ldots  COL_{\pi_m}(\A\TT)$ is a compatible ordered basis for the filtration $Z(F_1 X, G_1 X) \subseteq \cdots  \subseteq  Z(F_N X, G_N X)$.  
\end{theorem}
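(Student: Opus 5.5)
The plan is to rewrite the second U-match identity so that the columns of $\A\TT$ indexed by $\pi$ become expressed in terms of the columns of $\B$. Then I will transfer compatibility from Theorem \ref{thm:bcycles}, using triangularity of $\SS$.

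Start from $\TT\MM = (\A^{-1}\B)\SS$ and multiply on the left by $\A$ to get $\A\TT\MM = \B\SS$. Since $\MM$ has exactly one nonzero entry in column $j$, namely $\MM_{\pi_j, j}$, column $j$ of the left-hand side is $\MM_{\pi_j,j}\, COL_{\pi_j}(\A\TT)$. Hence
\[
COL_{\pi_j}(\A\TT) = \MM_{\pi_j,j}^{-1}\, COL_j(\B\SS) = \MM_{\pi_j,j}^{-1} \sum_{k\le j} \SS_{kj}\, COL_k(\B),
\]
using that $\SS$ is upper triangular. Moreover $\SS_{jj}=1$, so $COL_j(\B)$ appears with coefficient one.

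Next I show that for each $\ell$ the spans agree: $\mathrm{span}\{COL_{\pi_1}(\A\TT),\dots,COL_{\pi_\ell}(\A\TT)\} = \mathrm{span}\{COL_1(\B),\dots,COL_\ell(\B)\}$. The inclusion $\subseteq$ is immediate from the display. Equality follows because the transition matrix, namely the leading $\ell\times\ell$ block of $\SS$ with columns rescaled by nonzero scalars, is upper unitriangular up to scaling and hence invertible. Equivalently, the vectors on the left are linearly independent, since $\A\TT$ is invertible and the $\pi_j$ are distinct.

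Then I invoke Theorem \ref{thm:bcycles}. For each $t$ there is an $\ell$ such that $COL_1(\B),\dots,COL_\ell(\B)$ is a basis of $Z(F_tX,G_tX)$. By the span equality, the first $\ell$ terms of the sequence $COL_{\pi_j}(\A\TT)$ also span $Z(F_tX,G_tX)$, and they are $\ell$ linearly independent vectors, so they form a basis. This is exactly compatibility. The sequence is nonrepeating and a basis of the whole space $C(X)$ because $\pi$ is a permutation and $\A\TT$ is invertible.

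The only delicate point is bookkeeping: the indexing conventions (integer indices versus cell indices) must match between $\A$, $\TT$, $\B$ and $\SS$. In particular, the column index of $\MM$ must correspond to the column index of $\B$, and the row index to the column index of $\A$. This holds because the U-match was computed on $\A^{-1}\B$ with integer indexing. I expect no real obstacle beyond confirming that $\MM$ is square and invertible, which the paper already established just before the statement.
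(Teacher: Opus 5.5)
Your proposal is correct and is essentially the paper's proof: multiply the second U-match identity by $\A$ to get $\A\TT\MM = \B\SS$, use the matching structure of $\MM$ to identify $COL_{\pi_j}(\A\TT)$ with a nonzero multiple of $COL_j(\B\SS)$, use unitriangularity of $\SS$ to equate spans of initial segments with those of $\B$, and conclude from Theorem \ref{thm:bcycles}. Your explicit checks of linear independence and of spanning all of $C(X)$ are slightly more careful than the paper's write-up, but the route is the same.
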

\begin{proof}
    Let $k$ be given. Because $\MM$ is a generalized matching matrix, for each $i$ there exists a nonzero scalar $\alpha$ such that $COL_{\pi_i}(\A \TT) = \alpha \cdot COL_{i}(\A \TT\MM)$. Therefore the span of the first $k$ vectors in the sequence ($COL_{\pi_1}(\A\TT), COL_{\pi_2}(\A\TT), \ldots$) equals the span of the first $k$ columns of $\A \TT \MM$. 
    Because $\TT \MM = (\A^{-1}\B) \SS$, the span of the first $k$ columns of $\A \TT\MM$ equals the span of the first $k$ columns of $\A (\A^{-1}\B) \SS = \B \SS$. Since $\SS$ is invertible and upper triangular, the span of the first $k$ columns of $\B \SS$ equals the span of the first $k$ columns of $\B$. Thus the span of the first $k$ vectors in  ($COL_{\pi_1}(\A\TT), COL_{\pi_2}(\A\TT), \ldots$)
    equals the span of the first $k$ columns of $B$. 
    The desired conclusion therefore follows from Theorem \ref{thm:bcycles}, which states that the columns of $\B$ form a compatible ordered basis for the filtration $Z(F_1 X, G_1 X) \subseteq \cdots  \subseteq  Z(F_N X, G_N X)$. 
\end{proof}

\subsection{Birth and death values}
\label{sec:birthdeathvalues}

Here, we provide the final detail required to implement Algorithm \ref{alg:prh}, by showing how to calculate $\rbb(COL_i(\A\TT))$ and $\rcb(COL_i(\A\TT))$ for any $i$. In fact, this calculation requires only a small number of lookup operations. We begin by reducing the problem to questions about $\S$ and $\T$.

\begin{theorem}
    We have $\rbb(COL_j(\A\TT)) = \rbb(COL_j(\A))=
    \rbb(COL_{\beta_j}(\T))$
    for all $j$. 
\end{theorem}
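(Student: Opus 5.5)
The second equality is bookkeeping: by construction $\A_{ij} = \T_{\cs_i,\cb_j}$. Under the convention that a column is identified with the chain $\sum_\sigma \A_{\sigma\eta}\,\sigma$, the $j$th column of $\A$ and the column of $\T$ indexed by the cell $\cb_j$ are the same chain. So $COL_j(\A) = COL_{\cb_j}(\T)$ as elements of $C(X)$, and $\rbb$ agrees on them.

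The main content is the first equality, $\rbb(COL_j(\A\TT)) = \rbb(COL_j(\A))$. Write $a_k = COL_k(\A)$ and $t_k = \rbb(a_k)$. By the choice of the ordering $\cb$, $t_1 \le \cdots \le t_m$, and by Theorem~\ref{thm:aboundaries} the $a_k$ form a compatible ordered basis for the filtration $B(F_tX,G_tX)$. Since $\TT$ is upper triangular with ones on the diagonal,
\[
COL_j(\A\TT) = a_j + \sum_{k<j} \TT_{kj}\, a_k .
\]

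For the upper bound, each $a_k$ with $k\le j$ lies in $B(F_{t_j}X, G_{t_j}X)$, because $t_k \le t_j$ and the relative boundary spaces are nested. Hence the sum lies in that space too, and $\rbb(COL_j(\A\TT)) \le t_j$.

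For the lower bound, which is the step needing care, I use compatibility. Suppose $c := COL_j(\A\TT) \in B(F_sX,G_sX)$ for some $s < t_j$. Compatibility gives an $\ell$ such that $a_1,\dots,a_\ell$ is a basis of $B(F_sX,G_sX)$. Every $a_k$ with $k\le \ell$ satisfies $t_k \le s < t_j$, while $a_j$ satisfies $t_j > s$, so $a_j \notin B(F_sX,G_sX)$. This forces $\ell < j$. Expanding $c$ in the basis $a_1,\dots,a_m$ of $C(X)$, its coefficient on $a_j$ is $1$; I would note that the columns of $\A$ span all of $C(X)$ because $B(F_NX,G_NX)=C(X)$ when $G_N X = X$. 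On the other hand, membership in $B(F_sX,G_sX)=\operatorname{span}(a_1,\dots,a_\ell)$ forces the coefficient on $a_j$ to be $0$, by uniqueness of coordinates. This is a contradiction, so $\rbb(c) = t_j$. This is the same argument as in Theorem~\ref{thm:atboundaries}, made quantitative.
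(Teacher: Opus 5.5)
Your proof is correct and follows the same route as the paper. The second equality is the definitional identification $COL_j(\A) = COL_{\cb_j}(\T)$. The first rests on two facts: the columns of $\A$ form a compatible ordered basis (Theorem~\ref{thm:aboundaries}), and $\TT$ is unit upper triangular, which is exactly what underlies Theorem~\ref{thm:atboundaries}.

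The only difference is one of detail. The paper cites both compatibility theorems and leaves implicit why two compatible ordered bases of the same filtration must have equal $\rbb$ values index by index. Your upper-bound and leading-coefficient arguments make that step explicit.
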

\begin{proof}
    The equation $\rbb(COL_j(\A\TT)) = \rbb(COL_j(\A))$ follows from Theorems \ref{thm:aboundaries} and \ref{thm:atboundaries}, which state that the sequences $(COL_1(\A), COL_2(\A), \ldots)$ and $(COL_1(\A\TT), COL_2(\A\TT), \ldots)$ are both compatible ordered bases for the filtration $B(F_1X,G_1X) \subseteq \cdots \subseteq B(F_NX,G_NX)$.
    The equation $\rbb(COL_j(\A))=
    \rbb(COL_{\beta_j}(\T))$ follows from the fact that column $_j$ of $\A$ represents the same chain as column $\beta_j$ of $\T$, by definition of $\A$.
\end{proof}

\begin{theorem} We have $    \rcb(COL_{\pi_i}(\A\TT))  = \rcb(COL_{\gamma_i}(\S))$ for all $i$.  
% (Recall $\pi$ is the unique permutation of $\{1, \ldots, m\}$ such that $\MM_{\pi_1,1}, \MM_{\pi_2,2}, \ldots$ are the nonzero entries of $\MM$.)
\end{theorem}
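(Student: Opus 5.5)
We mirror the proof of the preceding theorem for $\rbb$, replacing the boundary filtration by the relative cycle filtration. The two facts we compare are Theorem \ref{thm:bcycles} and Theorem \ref{thm:atcycles}. Theorem \ref{thm:bcycles} says $(COL_1(\B), COL_2(\B), \ldots)$ is a compatible ordered basis for $Z(F_1X,G_1X) \subseteq \cdots \subseteq Z(F_NX,G_NX)$. Theorem \ref{thm:atcycles} says the same for $(COL_{\pi_1}(\A\TT), COL_{\pi_2}(\A\TT), \ldots)$. Moreover, the proof of Theorem \ref{thm:atcycles} gives more than compatibility: for every $k$, the span of the first $k$ vectors in the second sequence equals the span of the first $k$ columns of $\B$.

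The key lemma we will use is the following. Suppose two ordered bases $b$ and $b'$ span the same initial segments, i.e., $\mathrm{span}(b_1,\ldots,b_k) = \mathrm{span}(b'_1,\ldots,b'_k)$ for all $k$. Suppose also that $b$ is compatible with the filtration. Then $f(b_k) = f(b'_k)$ for every $k$, where $f$ is the sublevel-set function of the filtration (here $\rcb$).

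To prove the lemma, recall that $f(c)$ is the least $t$ with $c \in Z(F_tX,G_tX)$. Fix $t$. By compatibility, $Z(F_tX,G_tX) = \mathrm{span}(b_1,\ldots,b_\ell)$ for some $\ell$, and this also equals $\mathrm{span}(b'_1,\ldots,b'_\ell)$. Since $b'$ is linearly independent, $b'_k$ lies in this span iff $k \le \ell$; the same holds for $b_k$. Hence $b_k \in Z(F_tX,G_tX) \iff b'_k \in Z(F_tX,G_tX)$ for every $t$, which gives $f(b_k)=f(b'_k)$.

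Applying the lemma with $b = (COL_i(\B))_i$ and $b' = (COL_{\pi_i}(\A\TT))_i$ yields
\[
\rcb(COL_{\pi_i}(\A\TT)) = \rcb(COL_i(\B)).
\]
Next, by construction $\B_{\cs_j,\cc_i} = \S_{\cs_j,\cc_i}$, so $COL_i(\B)$ represents the same chain as $COL_{\cc_i}(\S)$. Since $\B$ and $\S$ are equivalent permutations, the chains are literally equal as linear combinations of cells. This gives $\rcb(COL_i(\B)) = \rcb(COL_{\gamma_i}(\S))$, and combining the two equalities proves the claim.

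The step needing the most care is the initial-segment equality of spans. It does not follow from Theorem \ref{thm:atcycles} as stated; it must be extracted from its proof, namely $\A\TT\MM = \B\SS$ together with $\SS$ being upper unitriangular and $\MM$ being a permutation matrix up to nonzero scalars. A second point to check is the indexing: the matching $\MM_{\pi_i,i}$ pairs column $i$ of $\B\SS$ with column $\pi_i$ of $\A\TT$. Once these are verified, the remaining argument is a formal consequence of linear independence.
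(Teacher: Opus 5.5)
Your proof is correct and takes the same route as the paper. Theorems \ref{thm:bcycles} and \ref{thm:atcycles} give $\rcb(COL_{\pi_i}(\A\TT)) = \rcb(COL_i(\B))$, and $COL_i(\B)$ is the same chain as $COL_{\cc_i}(\S)$. Your claim that this step needs the initial-segment span equality from the proof of Theorem \ref{thm:atcycles} is mistaken: for any compatible ordered basis $b$, linear independence already forces $b_k \in Z(F_tX,G_tX)$ if and only if $k \le \dim Z(F_tX,G_tX)$, so compatibility as stated in the theorems suffices.
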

\begin{proof}
    Theorems \ref{thm:bcycles} and \ref{thm:atcycles} state that both the sequence  $(COL_{\pi_1}(\A\TT), COL_{\pi_2}(\A\TT), \ldots)$ and the sequence $(COL_1(\B), COL_2(\B), \ldots)$ are  compatible ordered bases for the filtration of cycles $Z(F_1 X, G_1 X) \subseteq \cdots  \subseteq  Z(F_N X, G_N X)$.  Therefore, $\rcb(COL_{\pi_i}(\A\TT)) = \rcb(COL_{i}(\B))$ for all $i$. The desired conclusion follows, because $COL_{i}(\B)$ and $COL_{\cc_i}(\S)$ represent the same chain in $C(X)$, by definition of $\B$.
\end{proof}

We have now reduced the calculation of birth and death values for columns of $\A \TT$ to columns of $\S$ and $\T$. These values can, in turn, be caculated with simple lookup formulae:

\begin{theorem}
    We have 
    $
    \rcb(COL_\alpha(\S)) 
    = 
    \max \big \{b_G(COL_{\alpha}(\M)), b_F(\alpha) \big \}
    $ for any cell $\alpha$.
\end{theorem}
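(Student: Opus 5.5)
We compute $\rcb(COL_\alpha(\S))$ directly from the definition. Write $c = COL_\alpha(\S)$, viewed as a chain. By the definition of $f_{ker}$, $c \in Z(F_tX,G_tX)$ if and only if two conditions hold: (i) $c \in C(F_tX)$, and (ii) $\partial c \in C(G_tX)$. Hence
\[
\rcb(c) = \max\{ b_F(c), b_G(\partial c)\},
\]
where $b_G(0)$ is taken to be the minimal filtration index (so a zero boundary imposes no constraint). It then suffices to prove $b_F(c) = b_F(\alpha)$ and $b_G(\partial c) = b_G(COL_\alpha(\M))$.

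For the first equality we use that $\S$ is upper triangular with unit diagonal in the ordering $\cg_1,\dots,\cg_m$. If $\alpha = \cg_j$, then $c = \cg_j + \sum_{i<j} \S_{ij}\cg_i$. Since $b_F(\cg_1)\le \cdots \le b_F(\cg_m)$, every cell in the support of $c$ has $b_F$ at most $b_F(\cg_j)$, and $\cg_j$ itself occurs with coefficient $1$. Hence $b_F(c) = b_F(\alpha)$.

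For the second equality we start from $\D\S = \T\M$, which gives $\partial c = COL_\alpha(\D\S) = COL_\alpha(\T\M)$. Since $\M$ is a generalized matching matrix, column $\alpha$ of $\M$ is either zero or has exactly one nonzero entry, say $\M_{\cs_k,\alpha} = a \neq 0$.
\begin{itemize}
\item In the first case, $\partial c = 0$. Then $c$ is a cycle and $\rcb(c) = b_F(\alpha)$. This agrees with the formula under the convention $b_G(0) = -\infty$ (or the minimal index), which we adopt.
\item In the second case, $\partial c = a\cdot COL_{\cs_k}(\T) = a\bigl(\cs_k + \sum_{i<k}\T_{ik}\cs_i\bigr)$, because $\T$ is unitriangular in the ordering $\cs_1,\dots,\cs_m$. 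Since $b_G$ is nondecreasing along this ordering, the same argument as above gives $b_G(\partial c) = b_G(\cs_k)$. We also have $b_G(COL_\alpha(\M)) = b_G(a\cs_k) = b_G(\cs_k)$.
\end{itemize}
Combining the three displays gives the stated formula.

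The main obstacle is bookkeeping rather than mathematics. We must check carefully that the index conventions $\S_{\cg_i\cg_j}$, $\T_{\cs_i\cs_j}$, $\M_{\cs_i\cg_j}$ make the triangularity statements hold in the cell orderings used to sort by $b_F$ and $b_G$. Ties in $b_F$ or $b_G$ are harmless, since only the inequalities between birth values are used. We also need to state the zero-column convention explicitly.
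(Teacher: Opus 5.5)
Your proposal is correct and follows the same route as the paper. You split $\rcb(c)$ as $\max\{b_F(c), b_G(\partial c)\}$, obtain $b_F(c)=b_F(\alpha)$ from the unitriangularity of $\S$ in the $\cg$-ordering, and obtain $b_G(\partial c)=b_G(COL_\alpha(\M))$ from $\D\S=\T\M$, the matching structure of $\M$, and the unitriangularity of $\T$ in the $\cs$-ordering; your explicit convention for $b_G(0)$ spells out what the paper leaves implicit when $COL_\alpha(\M)=0$.
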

\begin{proof}
    Let $\eta = COL_\alpha(\S)$. Recall that we implicitly identify this vector with the chain $\sum_{\rho} \S_{\rho \alpha} \cdot \rho$, where $\rho$ runs over all cells in $X$. The chain $\eta$ is a relative cycle in $F_tX/G_tX$ if and only if $\eta \in F_t X$ and $\partial(\eta) \in G_tX$.  Therefore $f_{ker}(\eta) = \max\{b_G(\partial \eta), b_F(\eta) \}$.
    
    Because $\S$ is upper triangular with 1's on the diagonal, the column $COL_\alpha(\S)$ is a sequence of scalars of form $(\S_{\sigma_1 \alpha}, \ldots, \S_{\sigma_k \alpha}, 0, \ldots, 0)$ where $\sigma_k = \alpha$ and $\S_{\sigma_k \alpha} =1$. Thus, in fact, $\eta \equiv \sum_{i \le k} \S_{\sigma_i \alpha} \sigma_i$.  Since we required $b_F(\sigma_1) \le \cdots \le b_F(\sigma_m)$ when the sequence $\sigma_1, \ldots, \sigma_m$ was constructed, it follows that $b_F(\eta) = b_F(\alpha)$.

    Now consider $\partial \eta = \D COL_\alpha(\S) = COL_\alpha(\D\S) = COL_\alpha(\T\M) = \T COL_\alpha(\M)$. If $COL_\alpha(\M) = 0$ then $\partial\eta = 0$, so $b_G(\partial\eta) =  b_G(0) = b_G(COL_\alpha(M))$. On the other hand, if $COL_\alpha(\M) \neq 0$ then there is a unique cell $\rho$ such that $\M_{\rho \alpha} \neq 0$. In this case $\T COL_\alpha(\M)$ is a nonzero scalar multiple of $COL_\rho(\T) = (\T_{\cs_1,\rho}, \ldots, \T_{\cs_k,\rho}, 0, \ldots, 0)$, where $\cs_k = \rho$ and $\T_{\cs_k,\rho}=1$. Since we required $b_G(\cs_1) \le \cdots \le b_G(\cs_m)$ when the permutation $\cs_1, \ldots, \cs_m$ was defined, it follows that $b_G(\partial \eta) = b_G(\rho) = b_G(COL_\alpha(M))$.

    Substituting the calculated values for $b_G(\partial \eta)$ and $b_F(\eta)$ into the expression $f_{ker}(\eta) = \max\{b_G(\partial \eta), b_F(\eta) \}$ yields the desired result.
\end{proof}

\begin{theorem}
    We have 
    \begin{equation}
    \begin{split}
    \rbb(COL_\alpha(\T)) 
    &= 
    \begin{cases}
        \min \{b_G(\alpha), b_F(\eta)\} & \text{if $\exists$ a (unique) cell } \eta \text{ such that } \M_{\alpha, \eta } \neq 0\\
        b_G(\alpha) & \text{otherwise }
    \end{cases}    
    \notag
    \end{split}        
    \end{equation}
\end{theorem}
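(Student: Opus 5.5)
The plan is to characterize, for each $t$, exactly which columns of $\T$ span $B(F_tX,G_tX)$, and then use the linear independence of the columns of $\T$ to read off $\rbb(COL_\alpha(\T))$. Concretely, I will show that for every $t$,
\[
B(F_tX,G_tX) \;=\; \mathrm{span}\big\{ COL_\rho(\T) : b_G(\rho)\le t \big\} \;+\; \mathrm{span}\big\{ COL_\rho(\T) : \M_{\rho,\eta}\neq 0 \text{ for some } \eta \text{ with } b_F(\eta)\le t\big\}.
\]
This identity is more explicit than the bare spanning statement of Theorem \ref{thm:aboundaries}, which does not say which columns are used at level $t$. I will therefore derive it directly from $\T\M=\D\S$ rather than citing \cite[Theorem 10]{umatch}.

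First, I handle the summand $C(G_tX)$. The rows of $\D$, and hence the rows and columns of $\T$, are ordered by $\cs_1,\dots,\cs_m$ with nondecreasing $b_G$. The cells $\rho$ with $b_G(\rho)\le t$ therefore form an initial segment $\cs_1,\dots,\cs_\ell$. Since $\T$ is upper unitriangular, its first $\ell$ columns span exactly the span of $\cs_1,\dots,\cs_\ell$, which is $C(G_tX)$.

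Second, I handle the summand $\partial C(F_tX)$. The columns of $\D$ and $\S$ are ordered by $\cg$ with nondecreasing $b_F$, and $\S$ is upper unitriangular. Hence the columns $COL_\eta(\S)$ with $b_F(\eta)\le t$ span $C(F_tX)$. Applying $\D$ and using $\D\S=\T\M$ shows that $\partial C(F_tX)$ is spanned by $COL_\eta(\T\M)=\T\,COL_\eta(\M)$ for $b_F(\eta)\le t$. Each such vector is either zero or a nonzero multiple of $COL_\rho(\T)$, where $\rho$ is the unique row with $\M_{\rho\eta}\neq0$. Adding the two summands gives the displayed identity, because $B(F_tX,G_tX)=\partial C(F_tX)+C(G_tX)$ by Definition \ref{def:relCycleAndBoundary}.

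Finally, I conclude using the invertibility of $\T$. Its columns are linearly independent, so $COL_\alpha(\T)$ lies in the span of a set of columns of $\T$ if and only if $\alpha$ indexes one of them. By the identity, $COL_\alpha(\T)\in B(F_tX,G_tX)$ holds iff $b_G(\alpha)\le t$, or $\alpha$ is matched to some $\eta$ with $b_F(\eta)\le t$. Since $\M$ has at most one nonzero entry per row, the matching cell $\eta$ is unique when it exists. In that case the least such $t$ is $\min\{b_G(\alpha),b_F(\eta)\}$; otherwise it is $b_G(\alpha)$. This is precisely $\rbb(COL_\alpha(\T))$ by the defining property of $\rbb$.

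The main obstacle is the "only if" direction, that is, ruling out that $COL_\alpha(\T)$ becomes a relative boundary earlier through some cancellation. The column-independence argument handles this cleanly, but only once the level-$t$ spanning set has been pinned down exactly as above. Getting that exact spanning set is where care is needed, in particular with the permuted indexing conventions and with zero columns of $\M$.
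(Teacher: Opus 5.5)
Your proof is correct and takes essentially the same approach as the paper. Unitriangularity of $\T$ makes $C(G_tX)$ the span of an initial segment of its columns. Unitriangularity of $\S$ together with $\D\S=\T\M$ shows that $\partial C(F_tX)$ is spanned by those columns $COL_\rho(\T)$ matched via $\M$ to cells $\eta$ with $b_F(\eta)\le t$. Linear independence of the columns of $\T$ then reduces $COL_\alpha(\T)\in B(F_tX,G_tX)$ to the disjunction of the two threshold conditions. The only difference is cosmetic: you write out the level-$t$ spanning set of the sum explicitly, where the paper calls this step an ``elementary exercise.''
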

\begin{proof}
    
    % -----------
    Because $b_G(\cs_1) \le \cdots \le b_G(\cs_m)$ by construction of $\cs_1, \ldots, \cs_m$, each subspace $G_tX$ can be expressed as the span of a set of form $\{\cs_1, \ldots, \cs_\ell\}$ for some $\ell$. Since $\T$ is upper triangular, the $\ell$th column of $\T$ represents a linear combination of cells $\cs_1, \ldots, \cs_j$; therefore $\{COL_1(\T), \ldots, COL_\ell(\T)\}$ has the same span as $\{\cs_1, \ldots, \cs_\ell\}$. It follows that (A) the columns of $\T$ contain a basis for each subpsace $G_tX$, and (B) the filtration value $b_G(COL_\ell(\T))$ equals the filtration value $b_G(\cs_\ell)$, for all $\ell$.

    Because $b_F(\cg_1) \le \cdots\le b_F(\cg_m)$ by construction, for each $t$ there exists an $\ell$ such that $\cg_1, \ldots, \cg_\ell$ spans $F_tX$. Since the columns of $\D$ are indexed by $\cg_1, \ldots, \cg_m$, it follows that the first $\ell$ columns of $\D$ span $\partial(F_tX)$. Because $\S$ is upper triangular, the span of the first $\ell$ columns of $D$ equals the span of the first $\ell$ columns of $\D\S = \T \M$. Because $M$ is a generalized matching matrix, each of these first $\ell$ columns of $\T\M$ is either zero or a scalar multiple of a (unique) column of $\T$. Therefore, the (linearly independent) columns of $\T$ contain a basis for $\partial(F_tX)$. It follows, moreover, that a  column $COL_i(\T)$ is a boundary in $\partial(F_tX)$ iff the first $\ell$ columns of $\T \M$ contain a nonzero scalar multiple of $COL_i(\T)$; this is equivalent to the condition that there exists a $j \le \ell$ such that $\M_{ij} \neq 0$. Recalling that $\M_{ij} = \M_{\cs_i, \cg_j}$ by definition, and noting that $\{\cg_1, \ldots, \cg_\ell\} = \{ \cg_p : b_F(\cg_p) \le t\}$, we see that this condition is equivalent to the existence of a cell $\cg_j$ such that $b_F(\cg_j) \le t$ and $\M_{i,j} \neq 0$. Thus, for each column $COL_i(\T)$ one of two conditions must hold: (A) row $i$ of $\M$ is zero, in which case $COL_i(T) \notin \partial(F_t X)$ for any $t$, or (B) there exists a $j$ such that $M_{ij} \neq 0$, in which case $COL_i(T) \in \partial(F_t) \iff b_F(\cg_j) \le t$.

    We have now shown that the columns of $\T$ contain both (A) a basis for $\partial(F_tX)$, for all $t$, and (B) a basis for $G_tX$, for all $t$. Therefore, the columns of $\T$ contain a basis for $B(F_tX, G_tX) = \partial(F_tX) + G_tX$ for all $t$. An elementary exercise therefore shows that $COL_i(\T) \in B(F_tX, G_tX)$ if and only if (I) $COL_i(\T) \in \partial(F_tX)$ or (II) $COL_i(\T) \in G_tX$. The filtration value $f_{im}(COL_i \T)$ is the minimum $t$ for which one of these two conditions holds. Condition (II) holds for all $t \ge b_G(COL_i(\T))$, and we have seen that $b_G(COL_i(\T)) = b_G(\tau_i)$. Condition (I) holds for all $t \ge t_0$, where $t_0$ is either $b_F(\cg_j)$ (if there exists $\cg_j$ such that $M_{\cs_i, \cg_j} \neq 0$), or $\infty$ (if row $j$ of $\M$ is zero). The desired conclusion follows if we substitute $\alpha = \cs_i$ and $\eta = \cg_j$.
\end{proof}

\subsection{Proof of correctness}
\label{sec:correctness}

\newcommand{\U}{U}
 
Let $\U$ denote the unordered set of columns of $\A\TT$, regarded as chains. Theorems  \ref{thm:atboundaries} and \ref{thm:atcycles} state that $\U$ contains a basis for the space of relative cycles $Z(F_tX, G_tX)$ and the space of relative boundaries $B(F_tX, G_tX)$, for each $t$. An elementary exercise in linear algebra therefore shows that the set difference $\Delta_t := \Big (\U \cap Z(F_tX, G_tX) \Big) \setminus \Big( U \cap  B(F_tX, G_tX)\Big)$ forms a basis for the quotient space $H_*(F_tX,G_tX) =  \frac{Z(F_tX, G_tX)}{B(F_tX, G_tX}$.  By definition of $f_{ker}$, an element $u \in \U$ belongs to $Z(F_tX, G_tX)$ iff $f_{ker}(u) \le t$. Likewise, $u$ belongs to $B(F_tX, G_tX)$ iff $f_{im}(u) \le t$. 
Thus $u$ belongs to the basis $\Delta_t$  if and only if  $f_{ker}(u) \le t < f_{im}(u)$.

These observations collectively show that the barcode of $(\F, \G)$ has one bar of form  $[f_{ker}(u),f_{im}(u))$ for each $u \in U$, and the homology class corresponding to this bar has relative cycle representative $u$. Thus, the set $I = \{ (f_{ker}(u), f_{im}(u), u): u \in U \}$ is a valid representation of the barcode, complete with representatives. This is the same set, $I$, returned by Algorithm \ref{alg:prh}. Thus the algorithm is correct.

\subsection{Complexity}

The algorithm comprises several lookup operations (linear time), sorting operations ($O(m \log m)$ time), and U-match factorizations ($O(m^3)$ time, c.f. \cite{umatch}), where $m$ is the number of cells in the cell complex. Therefore, Algorithm \ref{alg:prh} is $O(m^3)$.

% ============================================

\section{Performance optimization (for lag filtrations)}
\label{sec:lag}

A fundamental challenge in most persistent homology computations is the problem of factoring matrices which explode in size as a function of the input, often having hundreds of billions of rows and columns. The field of computational topology has made major strides in addressing this challenge in the special case where the matrix to be factored is $\D$, the differential of a cubical or simplicial complex. However, while many of the strides can be adapted to support the computation of the first U-match decomposition $(\T,\M,\D,\S)$ in Algorithm \ref{alg:prh}, they fail to assist with the second decomposition $(\TT,\MM,\A^{-1}\B, \SS)$.

This challenge naturally motivates the search for special cases where persistent relative homology calculations can be accelerated. Indeed, much of the existing literature in computational relative homology can be seen as an attempt to find specific data types where effective performance optimizations can be applied.

Here we present a new family of examples and corresponding U-match optimization: lag filtrations.
Let $X$ be a finite cell complex equipped with filtrations $\F: F_1X \subseteq \cdots \subseteq F_NX = X$ and $\G: G_1X \subseteq \cdots \subseteq G_NX = X$. We say that $\G$ is a \emph{lag filtration} of $\F$ if there is a constant $l$ such that $G_tX = F_{t-l}X$ for all $1 \le t \le N$, where by convention $F_tX := 0$ for $t \le 0$. In this case, there exists a U-match decomposition of form $(J,\M,\D,J)$, where $\D$ is a copy of the differential matrix with rows and columns sorted in (possibly non-unique) filtration order with respect to the birth function $b_F$ \cite[Remark 8]{umatch}. In fact, this decomposition can be computed using any algorithm that produces a valid $R=DV$ decomposition of $\D$ \cite[Section 1.3 and Lemma 16]{umatch}. 

The columns of the invertible upper triangular matrix $J$ contain a basis for every subspace of form (A) $F_tX$, (B) $\partial(F_tX)$, or (C) $\partial^{-1}(F_tX) = \{ c \in C(X): \partial c \in F_tX \}$; in addition, the columns of $J$ contain a basis for any subspace which can be obtained from subspaces of form (A), (B), or (C) via sum or intersection \cite[Theorem 25]{umatch}. In particular, the columns of $J$ contain a basis for every relative cycle space $Z(F_tX, G_tX) = F_tX \cap \partial^{-1}(G_tX) = F_tX \cap \partial^{-1}(F_{t-l}X)$ and every relative boundary space $G_tX + \partial(F_tX) = F_{t-l}X + \partial(F_tX)$.

This last observation implies that we can follow the exact procedure described in Section \ref{sec:correctness} to compute the persistent relative homology barcode of  $(\F,\G)$; one needs only to substitute the matrix $J$ for the matrix $\A\TT$. The values of the  relative boundary sublevel set function $f_{im}$ and the realtive cycle sublsevel set function $f_{ker}$ for each column $COL_j(J)$ can be computed in a manner similar to the one described in Section \ref{sec:birthdeathvalues}.

Taken together, these observations show that the persistent relative homology barcode can be computed at cost comparable to an $R=DV$ decomposition, in the special case of lag filtrations. While $R=DV$ has the same asymptotic complexity as Algorithm \ref{alg:prh}, in practice the performance is vastly improved due to highly optimized $R=DV$ solvers. We believe this to be one of many potential performance optimizations in this problem space.

% % ============================================

\section{Implementation}
\label{sec:implementation}

We provide an implementation of the optimized procedure for persistent relative homology barcodes, specialized for lag filtrations (c.f. Section \ref{sec:lag}), available at \cite{prh_code}. The implementation is an extension of Open Applied Topology \cite{OAT}, an open source computational topology package with a low-level backend written in Rust and a high-level frontend in Python.

\begin{figure}[ht]
    \centering
    \begin{subfigure}[b]{0.32\textwidth}
        \centering
        \includegraphics[width=\textwidth]{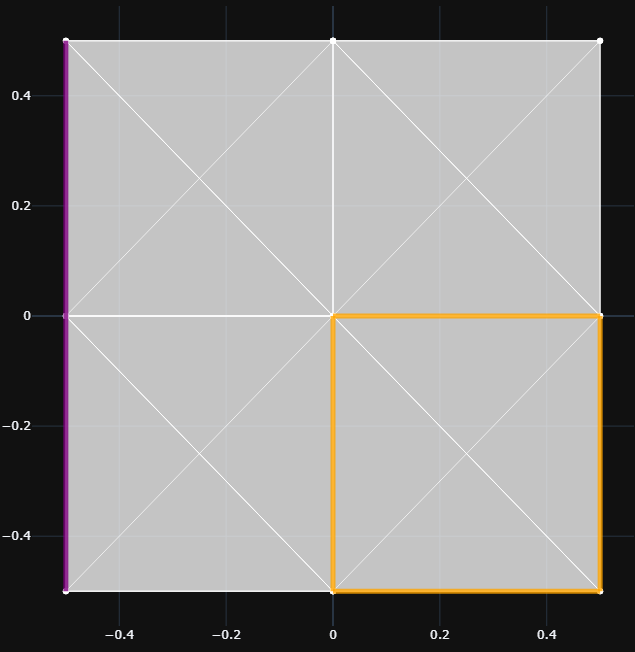}
        % \subcaption{A point cloud together with an absolute (orange) and relative (purple) cycle representative.}
        \label{fig:lag-simple-cloud}
    \end{subfigure}
    \hfill
    \begin{subfigure}[b]{0.32\textwidth}
        \centering
        \includegraphics[width=\textwidth]{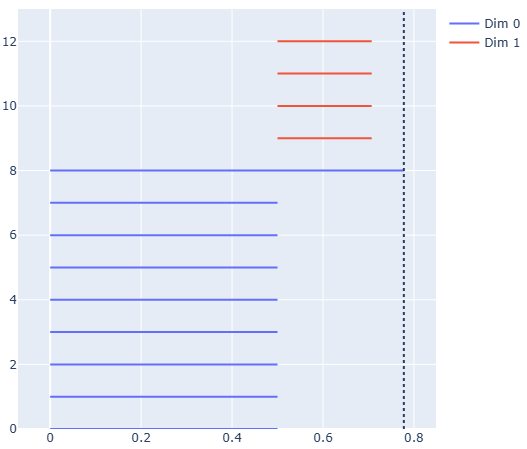}
        % \subcaption{Persistent homology barcode.}
        \label{fig:lag-simple-ph}
    \end{subfigure}
    \hfill
    \begin{subfigure}[b]{0.32\textwidth}
        \centering
        \includegraphics[width=\textwidth]{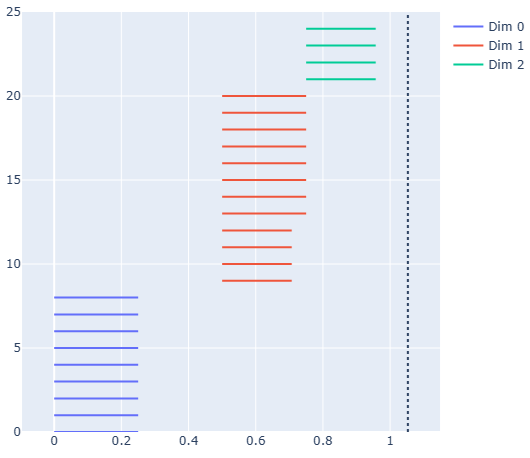}
        % \subcaption{Persistent relative homology barcode.}
        \label{fig:lag-simple-prh}
    \end{subfigure}
    \caption{A point cloud consisting of nodes on a $3 \times 3$ grid where adjacent nodes $a,b$ on the grid satisfy $\|a-b\|_2 = 1/2$ and diagonally adjacent nodes satisfy $\|a-b\|_2 = \sqrt{2}/2$. Included are examples of relative (purple) and absolute (orange) $1$-cycles \textbf{(left)} together with the standard persistent homology barcode \textbf{(center)} and persistent relative homology barcode \textbf{(right)} corresponding to a filtration with lag $l = 1/4$. Notice on the right figure that no class can have lifetime larger than $l$, including essential $0$-homology.}
    \label{fig:lag-simple}
\end{figure}

Figure \ref{fig:lag-simple} provides a conceptual illustration of a Vietoris-Rips complex built on a point cloud of grid nodes with lag $l = 1/4$. Notice there are four absolute $H_1$ classes whose interval modules are of the form $\mathbb{I} \langle 1/2, \sqrt{2}/2 \rangle$ (middle). There are eight relative $1$-homology classes whose interval modules are of the form $\mathbb{I} \langle 1/2, 3/4 \rangle$ (right). Four die at filtration value $\sqrt{2} / 2$ (length of the diagonal edges), while the others persist until all exterior edges enter the subcomplex (at 0.75). There are also four relative $2$-homology classes. The $2$-simplices emerging at filtration value $\sqrt{2} / 2$ are the surfaces of these spheres, while the $1$-simplices entering the subcomplex at $3/4$ are their boundaries. 

The example in Figure \ref{fig:lag-limit} computes  barcodes on a random point cloud in the Euclidean plane, for several distinct lag parameters $l$. As $l$ increases, the initial portion of the barcode converges to the absolute persistent homology barcode of the data. For smaller values of $l$, a variety of intermediate relative features emerge. We posit that these features may be of interest in scientific applications concerned with the local structure of data, e.g., materials science. 

\begin{figure}[ht]
    \centering
    % row 1
    \begin{subfigure}[b]{0.40\textwidth}
        \centering
        \includegraphics[width=\textwidth]{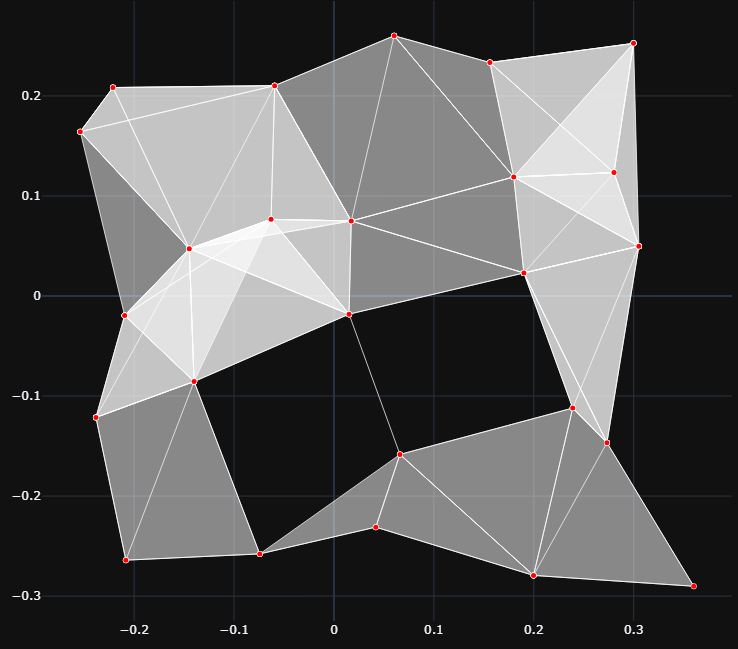}
        %\subcaption{}
        \label{fig:lag-cloud}
    \end{subfigure}
    \hfill
    \begin{subfigure}[b]{0.40\textwidth}
        \centering
        \includegraphics[width=\textwidth]{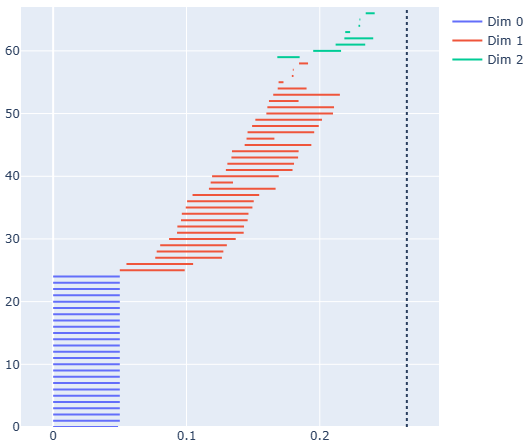}
        %\subcaption{}
        \label{fig:lag-005}
    \end{subfigure}

    \vskip\baselineskip % space between rows

    % row 2
    \begin{subfigure}[b]{0.40\textwidth}
        \centering
        \includegraphics[width=\textwidth]{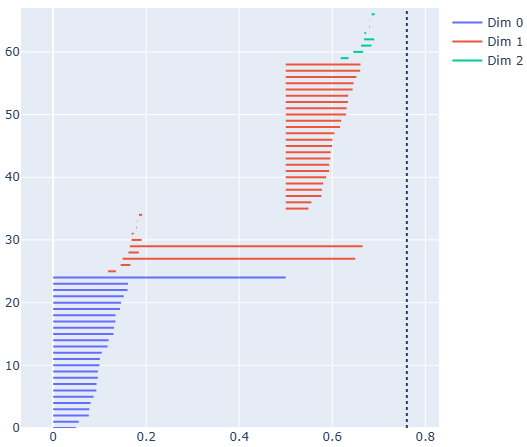}
        % \subcaption{}
        \label{fig:lag-050}
    \end{subfigure}
    \hfill
    \begin{subfigure}[b]{0.40\textwidth}
        \centering
        \includegraphics[width=\textwidth]{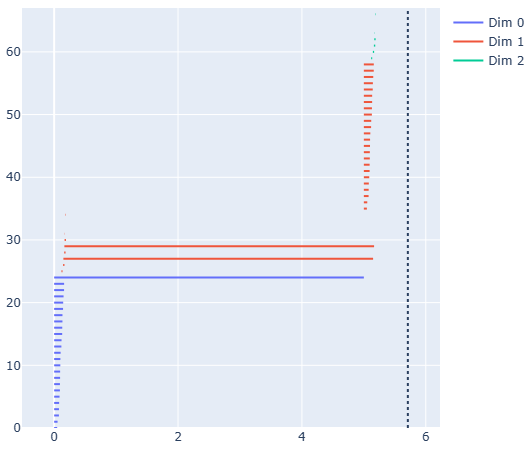}
        % \subcaption{}
        \label{fig:lag-500}
    \end{subfigure}

    \caption{A point cloud and (overlaid) Vietoris-Rips complex for fixed filtration value $\varepsilon$ \textbf{(top row, left)} together with a sequence of three persistent relative homology barcodes computed with lag parameters $\delta_1 = 0.05$ \textbf{(top row, right)}, $\delta_2 = 0.5$ \textbf{(bottom row, left)} and $\delta_3 = 5$ \textbf{(bottom row, right)}.}  
    \label{fig:lag-limit}
\end{figure}

% ============================================

% ============================================ 

\section{Conclusion}

We have provided two novel methods to compute persistent relative homology via U-match decomposition,  with an eye toward applications in TDA.  These methods can be analyzed in a highly transparent manner via elementary linear algebra. The main contribution of the present paper is an algorithm that works using (at most) two matrix decompositions. We compute not only a barcode for persistent relative homology but also persistent relative cycle representatives.
%We have shown that the resulting barcode is algorithmically stable via the Isometry Theorem. 
We also have provided a performance-optimized approach for the special case of lag-filtrations, and believe many more opportunities exist for performance enhancement, moving forward.

% ============================================

\newpage

\bibliography{prh-article}

\newpage

\appendix 

\end{document}